\title{Inverse Semigroupoid Actions and Representations}
\author[Lautenschlaeger and Tamusiunas]{Wesley G. Lautenschlaeger and Thaísa Tamusiunas}
\address{Departamento de Matemática Pura e Aplicada - Instituto de Matemática e Estatística, Universidade Federal do Rio Grande do Sul \\  Av. Bento Gon\c{c}alves, 9500, 91509-900. Porto Alegre-RS, Brazil}
\email{wesley\_gl@hotmail.com, thaisa.tamusiunas@gmail.com}
\date{}
\numberwithin{contador}{section}
\newtheorem{teo}{Theorem}[section]
\newtheorem{propo}[teo]{Proposition}
\newtheorem{lema}[teo]{Lemma}
\newtheorem{defi}[teo]{Definition}
\newtheorem{obs}[teo]{Remark}
\newtheorem{exe}[teo]{Example}
\begin{document}

\begin{abstract}
We show that there is a one-to-one correspondence between the partial actions of a groupoid $G$ on a set $X$ and the inverse semigroupoid actions of the Exel's inverse semigroupoid $S(G)$ on $X$. We also define inverse semigroupoid representations on a Hilbert space $H$, as well as the Exel's partial groupoid $C^*$-algebra $C_p^*(G)$, and we prove that there is a one-to-one correspondence between partial groupoid representations of $G$ on $H$, inverse semigroupoid representations of $S(G)$ on $H$ and $C^*$-algebra representations of $C_p^*(G)$ on $H$. 
\end{abstract}

\maketitle

\section{Introduction}
The notion of partial action on a $C^*$-algebra was introduced by Ruy Exel first time in \cite{exel1994}. The interesting thing about this study is that it allowed to classify several important classes of $C^*$-algebras as crossed products by partial actions. Michael Dokuchaev and Exel proved in \cite{dokuchaev2005associativity} the associativity of crossed products by partial actions, provided that the ring is semiprime,  and they used crossed products to study relations between partial actions of groups on algebras and partial representations. In the same work, they studied the enveloping actions and they gave a condition to a partial group action on an unital ring to be globalizable, that is,  the partial action is a restriction of a global one. As it was mentionated by Dokuchaev in \cite{dokuchaevsurvey}, the relationship between partial isomorphisms and global ones is relevant in several branches of mathematics, such as operator theory, topology, logic, graph theory, differential geometry, group theory and, mainly, as the interest of this paper, the theory of semigroups and its generalizations.

Recentely, many applications of groupoids to the study of partial actions have been presented, as well as inverse semigroup actions on topological groupoids. For instance, in \cite{buss2017}, the authors constructed saturated Fell bundles over inverse semigroups and non-Hausdorff étale groupoids and interpreted these as actions on $C^*$-algebras by Hilbert bimodules and describe the section algebras of these Fell bundles. In \cite{abadie2004} it was proved that, as in the case of global actions, any partial action gives rise to a groupoid provided with a Haar system, whose $C^*$-algebra agrees with the crossed product by the partial action. Furthermore, ring theoretic results of global and partial actions of groupoids were obtained in \cite{bagio2011partial, bagio2012partial, bagio2016partial, bagio2017partial, corta, renault, renault2}.

In \cite{exel1998partial} Exel constructed an inverse semigroup $S(G)$ associated to a group $G$ and showed that the actions of $S(G)$ are in one-to-one correspondence with the partial actions of $G$.Further, Kellendonk and Lawson showed in \cite{kellendonk2004partial} that the inverse semigroup $S(G)$ is isomorphic to the Birget-Rhodes expansion $\tilde{G}^{\mathcal{R}}$ of the group $G$. In \cite{lawson2006expansions}, Lawson, Margolis and Steinberg generalized this expansion for the case where $G$ is an inverse semigroup.

In the same work above, Exel defines partial representations of $G$ on a Hilbert space $H$ and representations of $S(G)$ on $H$, and presented a one-to-one correspondence between these two and representations of a $C^*$-algebra $C_p^*(G)$ (given by a certain crossed product) on $H$. Both structures - the inverse semigroup $S(G)$ and the $C^*$-algebra $C_p^*(G)$ - depend exclusively of the group $G$. Later, in \cite{exel2010actions}, Exel and Vieira presented a definition of crossed product for actions of inverse semigroups on $C^*$-algebras and proved that the crossed product by a partial action of a group is isomorphic to the crossed product by a related action of $S(G)$. 

In a groupoid context, W. Cortes \cite{cortes} conjectured the correspondence between partial groupoid actions and actions of the inverse semigroupoid associated and gave the first step in the direction of a broader theory. In this paper, we will make the construction of the Exel's semigroupoid using generators and relations, but this is exactly the Birget-Rhodes expansion defined by Gilbert in \cite{gilbert2005actions} for the case of ordered groupoids.

The purpose of this paper is to analyse the one-to-one relations worked by Exel in \cite{exel1998partial} and Exel and Vieira in \cite{exel2010actions} in a groupoid and inverse semigroupoid context. The paper is organized as follows. In Section 2, we define the Exel's semigroupoid $S(G)$ and we observe that this structure is the Birget-Rhodes expansion of $G$. In Section 3 we characterize partial groupoid actions on a set and, given a groupoid $G$ and a set $X$, we show that there is a one-to-one correspondence between the partial actions of $G$ on $X$ and the inverse semigroupoid actions of $S(G)$ on $X$. In Section 4, we define the algebraic crossed product of an inverse semigroupoid on a semiprime algebra $R$ by an action $\beta$ and we demonstrate that the algebraic crossed product of $R$ by an action $\beta$ on $S(G)$ is isomorphic to the the algebraic crossed product of $R$ by an action $\alpha$ on $G$, where $\alpha$ and $\beta$ are intrinsically related. In Section 5, we define inverse semigroupoid representations and partial groupoid representations on a Hilbert space $H$. Proposition 5.3 states that there is a one-to-one correspondence between  partial groupoid representations of $G$ on $H$ and inverse semigroupoid representations of $S(G)$ on $H$. Finally, in Section 6 we define the Exel's partial groupoid $C^*$-algebra $C_p^*(G)$ and we prove that exists a one-to-one correspondence between partial groupoid representations of $G$ on $H$, inverse semigroupoid representations of $S(G)$ on $H$ and $C^*$-algebra representations of $C_p^*(G)$ on $H$. 

\section{Inverse Semigroupoid Arising from a Groupoid}

Let $S$ be a non-empty set equipped with a binary operation partially defined, denoted by concatenation. Given $s,t \in S$, we write $\exists st$ when the product $st$ is defined. The set $S$ is called a \textit{semigroupoid} if the associativity holds when it makes sense, that is, 
\begin{enumerate}
    \item[(i)] for all $r,s,t \in S$, $\exists (rs)t$ if and only if $\exists r(st)$, and in this case $(rs)t = r(st)$; and
    \item[(ii)] for all $r,s,t \in S$, $\exists rs$ and $\exists st$ if and only if $\exists (rs)t$.
\end{enumerate}

We say that $S$ is an \textit{inverse semigroupoid} if for all $s \in S$ exists a unique element $s^{-1} \in S$ such that $\exists ss^{-1}$, $\exists s^{-1}s$ and the equalities 
\begin{align*}
    ss^{-1}s = s; \quad \quad s^{-1}ss^{-1} = s^{-1}
\end{align*}
hold. We say that an element $e \in S$ is an \emph{idempotent} if $\exists ee$ and $e^2 = e$. Notice that in this case $e = e^{-1}$. We denote by $E(S)$ the set of idempotents in $S$. One can prove that a regular semigroupoid $S$ (every element has a non-necessarily unique inverse) is an inverse semigroupoid if and only if the idempotent elements of $S$ commute \cite[Lemma 3.3.1]{liu2016free}.

It is immediate that every groupoid is an inverse semigroupoid, but the reciprocal is not necessarily true. In fact, if $S, T$ are inverse semigroups that are not groups, then $S \dot{\cup} T$, the disjoint union of $S$ and $T$, is an inverse semigroupoid that is not a groupoid. Thereby, inverse semigroupoid theory generalizes both inverse semigroup theory and groupoid theory. An \emph{inverse category} is an inverse semigroupoid that is also a category. If $C$ is an inverse category, we will denote by $C_0$ its set of identities, that is, $C_0 = \{e \in C : \text{ if } \exists ex \text{ (resp. } \exists ye\text{)} \text{ for } x \in C \text{ (resp. } y \in C\text{)}, \text{ then } ex = x \text{ (resp. } ey = y \text{)}\}$.

Let $S$ and $S'$ be semigroupoids. A map $f : S \rightarrow S'$ is called a \textit{semigroupoid homomorphism} if for all $s, t \in S$ such that $\exists st$, we have that $\exists f(s)f(t)$ and, in this case, $f(st) = f(s)f(t)$. The map is called \textit{semigroupoid anti-homomorphism} if for all $s, t \in S$ such that $\exists st$, then $\exists f(t)f(s)$ and, in this case, $f(st) = f(t)f(s)$.

A \textit{free semigroupoid} $S$ is a semigroupoid such that every element of $S$ is a finite concatenation of a subset of $S$, when that concatenation makes sense.

Our goal in this section is to extend to semigroupoids the construction of a specific semigroup constructed from a group $G$, given by Exel in \cite{exel1998partial}, called \textit{Exel's semigroup of} $G$.

Recall that a \emph{groupoid} is an inverse category $G$ in where $E(G) = G_0$. In this case, given $g \in G$, we denote by $r(g) = gg^{-1} \in G_0$ and $d(g) = g^{-1}g \in G_0$.
 
\begin{defi}
Let $G$ be a groupoid. For all $s \in G$, take the symbol $[s]$. We define the \textit{Exel's semigroupoid associated with the groupoid} $G$, denoted by $S(G)$, as the free semigroupoid generated by the symbols $[s]$ for each $s \in G$. We have that $\exists [s][t]$ in $S(G)$ if and only if $\exists st$ in $G$. Furthermore, the following relations hold:
\begin{enumerate}

    \item[(i)] if $s,t \in G$ and $\exists st$, then $[s^{-1}][s][t] = [s^{-1}][st]$;
    
    \item[(ii)] if $s,t \in G$ and $\exists st$, then $[s][t][t^{-1}] = [st][t^{-1}]$; and
    
    \item[(iii)] $[r(s)][s] = [s] = [s][d(s)]$.
    
\end{enumerate}

\end{defi}

\begin{obs}
\label{obs1}
Note that for all $t \in G$, exists $ t^{-1} \in G$ such that $\exists tt^{-1}$, $\exists t^{-1}t$, $tt^{-1} = r(t)$ and $t^{-1}t = d(t)$. Hence always $\exists [t][t^{-1}]$ and $\exists [t^{-1}][t]$ in $S(G)$. 
\end{obs}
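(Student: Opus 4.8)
The plan is to read the claim off directly from the groupoid axioms together with the composability rule built into the definition of $S(G)$, so the argument is essentially a translation between the two structures. First I would recall the defining invertibility property of a groupoid: for every arrow $t \in G$ there is an arrow $t^{-1} \in G$ whose composites with $t$ are defined and equal the appropriate units, namely $tt^{-1} = r(t)$ and $t^{-1}t = d(t)$. This is exactly the first sentence of the remark, so it needs no argument beyond invoking that axiom; in particular it records that both composites $tt^{-1}$ and $t^{-1}t$ are defined in $G$, i.e. $\exists\, tt^{-1}$ and $\exists\, t^{-1}t$.

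For the second sentence I would invoke the clause of the definition of $S(G)$ asserting that for any two generators one has $\exists\, [s][t]$ in $S(G)$ if and only if $\exists\, st$ in $G$. Applying this equivalence to the composable pairs $(t, t^{-1})$ and $(t^{-1}, t)$, which are composable in $G$ by the previous step, immediately gives $\exists\, [t][t^{-1}]$ and $\exists\, [t^{-1}][t]$ in $S(G)$, as claimed.

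There is no genuine obstacle here: the statement is a direct consequence of how composability in the free semigroupoid $S(G)$ is inherited from composability in $G$. The only point to keep in mind is conceptual rather than technical, namely that one must read $r(t)$ and $d(t)$ as the unit arrows at the range and domain of $t$ so that $tt^{-1}$ and $t^{-1}t$ are honest composites in $G$. The role of this remark is organizational: it guarantees that the products appearing in relations (i)--(iii), and later in the verification that each generator $[s]$ admits $[s^{-1}]$ as an inverse, are actually well defined, which is precisely what the subsequent proof that $S(G)$ is an inverse semigroupoid will rely on.
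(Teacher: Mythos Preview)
Your proposal is correct and matches the paper's treatment: the paper states this as a remark without proof, since it is immediate from the groupoid axioms together with the clause in the definition of $S(G)$ that $\exists\,[s][t]$ iff $\exists\,st$. Your explanation spells out exactly this reasoning.
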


The elements in the form $[t][t^{-1}]$ play an important rule in the characterization of $S(G)$. So let $\epsilon_t = [t][t^{-1}]$ in $S(G)$. We will prove that the $\epsilon$'s are idempotents is $S(G)$ and then we will use it to show that $S(G)$ is an inverse semigroupoid. 

The next proposition follows directly from the universal property of free semigroupoids \cite[Theorem 3.1.1]{liu2016free}.

\begin{propo}
\label{prop1}Given a groupoid $G$, a semigroupoid $S$ and a map $f : G \rightarrow S$, where:

\begin{enumerate}
    \item[\emph{(i)}] if $\exists st$ then $\exists f(s)f(t)$;

    \item[\emph{(ii)}] if $\exists st$ then $f(s^{-1})f(st) = f(s^{-1})f(s)f(t)$;
    
    \item[\emph{(iii)}] if $\exists st$ then $f(st)f(t^{-1}) = f(s)f(t)f(t^{-1})$; and
    
    \item[\emph{(iv)}] $f(r(s))f(s) = f(s) = f(s)f(d(s))$,
\end{enumerate}

\noindent then there is a unique semigroupoid homomorphism $\overline{f} : S(G) \rightarrow S$ such that $\overline{f}([t]) = f(t)$.
\end{propo}

Let $S(G)^{op}$ be the opposite semigroupoid of $S(G)$. That is, $S(G)^{op}$ coincides with $S(G)$ except by the partially defined multiplication $\cdot$, which is given by $$[s] \cdot [t] := [t][s],$$for all $[s], [t] \in S(G)$. Notice that $\exists [s] \cdot [t]$ if and only if $\exists ts$, for all $s, t \in G$.

\begin{propo}
There is an involutive anti-automorphism $^* : S(G) \rightarrow S(G)$ such that $[t]^* = [t^{-1}]$.
\end{propo}

\begin{proof}
Consider $f: G \rightarrow S(G)^{op}$, where $f(t) = [t^{-1}]$. 

We are going to show that $f$ holds the properties of Proposition \ref{prop1}. Indeed, given $(s,t) \in G^2$, we have that  $\begin{array}{rl}
     \exists st &  \Leftrightarrow \exists t^{-1}s^{-1} \Leftrightarrow \exists [t^{-1}][s^{-1}] \Leftrightarrow \exists [s^{-1}] \cdot [t^{-1}] \Leftrightarrow \exists f(s) \cdot f(t). 
    \end{array}$

This shows (i). To see that (ii) holds, observe that 

\begin{center}$
\begin{array}{rl}
     f(s^{-1}) \cdot f(st) &  = [s] \cdot [(st)^{-1}]  = [s] \cdot [t^{-1}s^{-1}] = [t^{-1}s^{-1}][s]  \\
     &  = [t^{-1}][s^{-1}][s]  = [s] \cdot [s^{-1}] \cdot [t^{-1}] = f(s^{-1}) \cdot f(s) \cdot f(t).
    \end{array}$
\end{center}

The proof of (iii) is similar. To see that (iv) holds, just note that

\begin{minipage}[b]{0.5\linewidth}
\begin{center}$
\begin{array}{rl}
     f(s) \cdot f(d(s)) &  = [s^{-1}] \cdot [d(s)^{-1}] \text{\quad \quad and} \\
      & = [s^{-1}] \cdot [d(s)]  \\
      & = [d(s)][s^{-1}]  \\
      & = [r(s^{-1})][s^{-1}]  \\
      & = [s^{-1}] = f(s) 
 \end{array}$
\end{center}
\end{minipage}
\begin{minipage}[l]{0.3\linewidth}
\begin{align*}
     f(r(s)) \cdot f(s) & = [r(s)^{-1}] \cdot [s^{-1}] \\
     & = [r(s)] \cdot [s^{-1}] \\
     & = [s^{-1}][r(s)] \\
     & = [s^{-1}][d(s^{-1})] \\
     & = [s^{-1}] = f(s).
\end{align*}
\end{minipage}

Therefore, there is a unique homomorphism $\overline{f} : S(G) \rightarrow S(G)^{op}$ such that $\overline{f}([t]) = f(t) = [t^{-1}]$. Define $^* : S(G) \rightarrow S(G)$ as $[t]^* = \overline{f}([t])$. We have $([t]^*)^* = \overline{f}([t])^* = [t^{-1}]^* = \overline{f}([t^{-1}]) = [t]$.

Moreover, if $\exists st$ then\begin{align*}
     ([s][t])^* &  = \overline{f}([s][t]) = \overline{f}([s]) \cdot \overline{f}([t]) = [s^{-1}] \cdot [t^{-1}] \\
     & = [t^{-1}][s^{-1}] = \overline{f}([t])\overline{f}([s]) = [t]^*[s]^*.
\end{align*}

Now it only remains for us to show that $^*$ is bijective. The surjectivity follows from the fact that we defined $^*$ on the generators of $S(G)$. The injectivity holds because the inverse map $s \mapsto s^{-1}$ is one-to-one in $G$. That completes the proof.
\end{proof} 

The above proposition enables us to characterize the elements $\epsilon_t$'s in $S(G)$. Remember by Remark \ref{obs1} that $\exists [t][t^{-1}]$, for all $t \in G$.

\begin{propo}\label{propepsilon}
Let $\epsilon_t = [t][t^{-1}] \in S(G)$. 
\begin{enumerate}
    \item[\emph{(i)}] $\epsilon_t^2 = \epsilon_t = \epsilon_t^*$;
    
    \item[\emph{(ii)}] if $\exists t^{-1}s$, then $\epsilon_t\epsilon_s = \epsilon_s\epsilon_t$;
    
    \item[\emph{(iii)}] if $\exists ts$, then  $[t]\epsilon_{s} = \epsilon_{ts}[t]$; and
    
    \item[\emph{(iv)}] if $\exists t^2$, then $[t]^2 = \epsilon_t[t^2]$.
\end{enumerate}

\end{propo}

\begin{proof}
With a little adaptation to the groupoid case, the proof has the same arguments as in \cite[Proposition 2.4]{exel1998partial}, so we will omit it.
\end{proof}

\begin{obs}
\label{obs2} Notice that $[t] = [t][d(t)] = [t][t^{-1}t] = [t][t^{-1}][t] = \epsilon_t[t]$, for all $[t] \in S(G)$.

\end{obs}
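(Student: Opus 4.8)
The statement is a short chain of four equalities concerning the single generator $[t]$, so the plan is simply to justify each link using the defining relations of $S(G)$ together with the elementary groupoid identities recorded in Remark \ref{obs1}; no induction or case analysis is needed.

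First I would read off $[t] = [t][d(t)]$ directly from relation (iii) in the definition of $S(G)$ (the right-hand half, taken with $s = t$). For the second link, I would observe that in the groupoid $G$ we have $d(t) = t^{-1}t$ by Remark \ref{obs1}, so the symbols $[d(t)]$ and $[t^{-1}t]$ are literally the same generator of $S(G)$; hence $[t][d(t)] = [t][t^{-1}t]$ at once, and this product is already known to be defined since $\exists\,[t][d(t)]$ by relation (iii).

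The only step requiring a genuine appeal to a relation rather than a substitution is the third, $[t][t^{-1}t] = [t][t^{-1}][t]$. Here I would invoke relation (i) with the substitution $s \mapsto t^{-1}$: since $\exists\, t^{-1}t$ (again Remark \ref{obs1}), relation (i) yields $[(t^{-1})^{-1}][t^{-1}][t] = [(t^{-1})^{-1}][t^{-1}t]$, which is exactly $[t][t^{-1}][t] = [t][t^{-1}t]$ once we use $(t^{-1})^{-1} = t$. Finally, the last equality $[t][t^{-1}][t] = \epsilon_t[t]$ is just the definition $\epsilon_t = [t][t^{-1}]$ combined with associativity of the semigroupoid product, which applies because all the relevant products are defined by Remark \ref{obs1}.

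The computation is routine; the one point that deserves care is the bookkeeping of which partial products are defined at each stage, so that every rewriting is legitimate in the partial operation of $S(G)$. Since all the needed existence facts follow from Remark \ref{obs1} and relation (iii), I anticipate no real obstacle beyond this verification, and the chain closes immediately.
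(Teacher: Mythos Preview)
Your justification is correct and is exactly the intended verification; the paper states this as a remark without proof, and the chain of equalities is meant to be read off from relation (iii), the groupoid identity $d(t) = t^{-1}t$, relation (i) applied with $s = t^{-1}$, and the definition of $\epsilon_t$, just as you lay out.
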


We can extend the above proposition to the case of the finite product of $\epsilon$'s. Given a groupoid $G$, define $X_g = \{ h \in G : r(h) = r(g) \} \subseteq G$. Note that $h \in X_g$ if and only if $\exists h^{-1}g$. It is obvious that $X_g = X_{r(g)}$, $h \in X_g \Rightarrow X_g = X_h$, and $G = \displaystyle \bigcup_{e \in G_0}^{\cdot} X_e$. 

The next result states that every element in $S(G)$ has a particular decomposition in terms of $\epsilon$'s.

\begin{propo}
\label{propdec}
Every element $\alpha \in S(G)$ admits a decomposition $\alpha = \epsilon_{r_1}\epsilon_{r_2} \cdots \epsilon_{r_n}[s]$, where $n \geq 0$, $r_1, \ldots , r_n, s \in X_s$. Moreover, 
\begin{enumerate}
    \item[\emph{(i)}] $r_i \neq r_j$ if $i \neq j$; and
    \item[\emph{(ii)}] $r_i \neq s$, $r_i \notin G_0$, $\forall i = 1, \ldots , n$.
\end{enumerate}
We say that this is a \emph{standard form} of the element $\alpha \in S(G)$.
\end{propo}

\begin{proof}
Let $S \subseteq S(G)$ be the set of elements of $S(G)$ that admit a decomposition in the standard form. Since we can have $n = 0$, it follows that $[g] \in S$, for all $g \in G$. Therefore, as all generators of $S(G)$ are in $S$, it only remains for us to prove that $S$ is a right ideal of $S(G)$. 

Take $\alpha \in S$. Then there are $r_1, \ldots , r_n, s \in X_s$ such that (i) and (ii) hold. Let also $t \in G$ be such that $\exists \alpha [t]$, that is, such that $\exists st$. Thereby, 

\begin{center}
$\begin{array}{r}
    \alpha [t] = \epsilon_{r_1} \cdots \epsilon_{r_n} [s] [t]  = \epsilon_{r_1} \cdots \epsilon_{r_n} [s] [s^{-1}] [s] [t]  = \epsilon_{r_1} \cdots \epsilon_{r_n} [s] [s^{-1}] [st] = \epsilon_{r_1} \cdots \epsilon_{r_n} \epsilon_s [st].
\end{array}$
\end{center}

If any $r_i = st$, it is enough to commute $\epsilon_{st}$ to the left $[st]$ and use the Remark \ref{obs2}. Then $\epsilon_{st}[st] = [st]$ into the decomposition. Furthermore, if $s \in G_0$, the element $\epsilon_s$ is ommited from the decomposition and $[st] = [t]$. Thus $S(G) = S$.
\end{proof}

\begin{propo}
For all $\alpha \in S(G)$, we have $\alpha \alpha^* \alpha = \alpha$ and $\alpha^* \alpha \alpha^* = \alpha^*$.
\end{propo}

\begin{proof}
Given $\alpha \in S(G)$, we have that $\alpha = \epsilon_{r_1} \cdots \epsilon_{r_n}[s]$ for certain $r_1, \ldots , r_n, s \in X_s$, satisfying (i) and (ii) of Proposition \ref{propdec}.

Observe that $^*$ is an anti-automorphism and every $\epsilon_{r_i}$ is self-adjoint, thus

\begin{center}
    $\alpha^* = (\epsilon_{r_1} \cdots \epsilon_{r_n}[s])^* = [s^{-1}]\epsilon_{r_n} \cdots \epsilon_{r_1}$.
\end{center}

Since $\exists [t][t^{-1}]$ and $\exists [t^{-1}][t]$ for all $t \in G$, then $\exists \alpha\alpha^*$ and $\exists \alpha^*\alpha$. Then, with a little adaptation to the groupoid case, the proof follows as \cite[Proposition 2.7]{exel1998partial}.
\end{proof}

To prove that the Exel's semigroupoid $S(G)$ is an inverse semigroupoid it only remains to show that if $\alpha \in S(G)$, then $\alpha^*$ is the only inverse of $\alpha$ in $S(G)$. For this, we define two special maps in $S(G)$.

Consider the identity map in $G$. Since every groupoid is a semigroupoid it is easy to see that this map holds the conditions (i)-(iv) of Proposition \ref{prop1}. Therefore we can extend it to a semigroupoid homomorphism $\partial : S(G) \rightarrow G$ where $\partial([g]) = g$, for all $g \in G$. Notice that given $\alpha = \epsilon_{r_1}\epsilon_{r_2} \cdots \epsilon_{r_n}[s] \in S(G)$ we have
\begin{align*}
     \partial(\alpha) & = \partial(\epsilon_{r_1}\epsilon_{r_2} \cdots \epsilon_{r_n}[s]) = \partial([r_1][r_1^{-1}] \cdots [r_n][r_n^{-1}][s]) = \partial([r_1])\partial([r_1^{-1}]) \cdots \partial([r_n])\partial([r_n^{-1}])\partial([s])  \\
     & = r_1r_1^{-1} \cdots r_nr_n^{-1}s = r(r_1)r(r_2) \cdots r(r_n)s = r(s)s =  s.
\end{align*}

Observe that we could ``split" the products by the homomorphism $\partial$ since $\exists [r_i^{-1}][r_{i+1}] \Leftrightarrow \exists r_i^{-1}r_{i+1}$ and $\exists [r_n^{-1}][s] \Leftrightarrow \exists r_n^{-1}s$. 


Define $\mathcal{P}_r(G)$ as the set formed by finite subsets $E \subseteq G$ such that $g \in E \Rightarrow r(g) \in E$. Let $\mathcal{F}(\mathcal{P}_r(G))$ be the semigroupoid of the functions of  $\mathcal{P}_r(G)$ into $\mathcal{P}_r(G)$. Consider the elements of $\mathcal{F}(\mathcal{P}_r(G))$ of the form $\phi_g: \mathcal{P}_r(G) \rightarrow \mathcal{P}_r(G)$ such that
\begin{center}
$\phi_g(E) = 
     \begin{cases}
       gE \cup \{ g, r(g) \}, &\quad\text{if } \exists gh, \forall h \in E \\
       \emptyset, &\quad\text{if exists } h \in E \text{ such that } \nexists gh,
     \end{cases}    $
\end{center}   
for all $g \in G$. Note that $\phi_g$ is well defined since that if $\exists gh$ then $d(g) = r(h)$, $gr(h) = gd(g) = g$ and $r(gh) = r(g)$. Hence we can define the map $\lambda : G \rightarrow \mathcal{F}(\mathcal{P}_r(G))$ given by $\lambda(g) = \phi_g$. It is straightforward to see that this map satisfies (i)-(iv) of Propostion \ref{prop1}. Therefore there is a unique semigroupoid homomorphism $\Lambda : S(G) \rightarrow \mathcal{F}(\mathcal{P}_r(G))$ where $\Lambda([g]) = \phi_g$. 

Given $\alpha = \epsilon_{r_1}\epsilon_{r_2} \cdots \epsilon_{r_n}[s] \in S(G)$ as in Propostion \ref{propdec} we have

\begin{center}$
    \begin{array}{rl}
     & \Lambda(\alpha)(\{ d(s) \}) = \Lambda(\epsilon_{r_1}\epsilon_{r_2} \cdots \epsilon_{r_n}[s])(\{ d(s) \}) \\
     & = \Lambda([r_1])\Lambda([r_1^{-1}]) \cdots \Lambda([r_n])\Lambda([r_n^{-1}])\Lambda([s])(\{ d(s) \}) \\
      & = \Lambda([r_1])\Lambda([r_1^{-1}]) \cdots \Lambda([r_n])\Lambda([r_n^{-1}])(\{ s, r(s) \}) \\
      & = \Lambda([r_1])\Lambda([r_1^{-1}]) \cdots \Lambda([r_n])(\{ r_n^{-1}s, r_n^{-1}r(s), r_n^{-1}, r(r_n^{-1}) \}) \\
      & =  \Lambda([r_1])\Lambda([r_1^{-1}]) \cdots \Lambda([r_n])(\{ r_n^{-1}s, r_n^{-1}, r(r_n^{-1}) \}) \\
      & = \Lambda([r_1])\Lambda([r_1^{-1}]) \cdots \Lambda([r_{n-1}^{-1}])(\{ r_nr_n^{-1}s, r_nr_n^{-1}, r_nr(r_n^{-1}), r_n, r(r_n) \}) \\
      & = \Lambda([r_1])\Lambda([r_1^{-1}]) \cdots \Lambda([r_{n-1}^{-1}])(\{ s, r_n, r(s) \}) \\
      & = \cdots = \{ s, r_1, \ldots , r_n, r(s) \}.
    \end{array}$
    \end{center}

This writing is well defined since $r_i \in X_s$ implies that $r(r_i) = r(s)$, for all $i = 1, \ldots, n - 1$.

We use $\partial$ and $\Lambda$ to prove the uniqueness of the decomposition of elements in $S(G)$ in the standard form.

\begin{propo}
The writing of elements in $S(G)$ in the standard form is unique up to the order of the $\epsilon$'s.
\end{propo}

\begin{proof}
Let $\alpha \in S(G)$ be such that $\alpha = \epsilon_{r_1} \cdots \epsilon_{r_n} [s]$, where $r_1, \ldots , r_n, s \in X_s$ as in Proposition \ref{propdec}. Assume that $\epsilon_{l_1} \cdots \epsilon_{l_m} [t]$ is another decomposition of $\alpha$, with $l_1 , \ldots , l_m, t \in X_t$ holding the same conditions. Thus
\begin{equation}
\label{eq1}
s = \partial(\epsilon_{r_1} \cdots \epsilon_{r_n} [s]) = \partial(\alpha) = \partial(\epsilon_{l_1} \cdots \epsilon_{l_m} [t]) = t,    
\end{equation}

Therefore $s = t$, which implies $\epsilon_{l_1} \cdots \epsilon_{l_m} [t] = \epsilon_{l_1} \cdots \epsilon_{l_m} [s]$. On the other hand,
\begin{equation}
\label{eq2}
\begin{split}
\{ r_1, \ldots , r_n, s, r(s) \} = \Lambda(\epsilon_{r_1} \cdots \epsilon_{r_n} [s])(\{ d(s) \}) = \Lambda(\alpha) \\ = \Lambda(\epsilon_{l_1} \cdots \epsilon_{l_m} [s])(\{ d(s)\}) = \{ l_1 \ldots , l_m, s, r(s) \}.
\end{split}
\end{equation}

Hence,
\begin{align*}
    \{ r_1, \ldots , r_n \} = \{ r_1, \ldots , r_n, s, r(s) \} \setminus \{ s, r(s) \} =  \{ l_1 \ldots , l_m, s, r(s) \} \setminus \{ s, r(s) \} = \{ l_1 \ldots , l_m\}. 
   \end{align*}

That concludes the proof, since the $\epsilon$'s commute and the conditions (i) and (ii) together with (\ref{eq1}) and (\ref{eq2}) guarantee that $m = n$.
\end{proof}

\begin{teo}\label{inverse}
$S(G)$ is an inverse semigroupoid.
\end{teo}

\begin{proof}
Let $\alpha = \epsilon_{r_1} \cdots \epsilon_{r_n} [s] \in S(G)$. We already know that $\alpha^* = [s^{-1}]\epsilon_{r_n} \cdots \epsilon_{r_1}$ is such that $\exists \alpha \alpha^*$, $\exists \alpha^* \alpha$, $\alpha \alpha^* \alpha = \alpha$ and $\alpha^* \alpha \alpha^* = \alpha^*$.

Assume that exists $\beta \in S(G)$ such that $\exists \alpha \beta$, $\exists \beta \alpha$, $\alpha \beta \alpha = \alpha$ and $\beta \alpha \beta = \beta$. We write $\beta^*$ in the standard form $\beta^* = \epsilon_{l_1} \cdots \epsilon_{l_m}[t]$. Thus, $\beta = [t^{-1}]\epsilon_{l_m} \cdots \epsilon_{l_1}$. Since $\exists \alpha \beta$ and $\exists \beta \alpha$, then $\exists st^{-1}$ and $\exists l_1^{-1}r_1$. Also, $s = \partial(\alpha) = \partial(\alpha \beta \alpha) = st^{-1}s$.

Note that $\exists t^{-1}s$, since $\exists t^{-1}l_m$, $\exists l_m^{-1}l_{m-1}$, $\ldots$, $\exists l_2^{-1}l_1$, $\exists l_1^{-1}r_1$, $\exists r_1^{-1}r_2$, $\ldots$, $\exists r_n^{-1}s$, from where $\exists t^{-1}l_ml_m^{-1}l_{m-1} \cdots l_2^{-1}l_1l_1^{-1}r_1r_1^{-1}r_2 \cdots r_n^{-1}s$ and this product is equal to $t^{-1}s$. Hence $\exists st^{-1}s$. Besides that, the uniqueness of the inverse element in $G$ tell us that $t = s$. So $\beta = [s^{-1}]\epsilon_{l_m} \cdots \epsilon_{l_1}$.

Observe that \begin{align*}
     \alpha\beta\alpha & = \epsilon_{r_1} \cdots \epsilon_{r_n} [s] [s^{-1}] \epsilon_{l_m} \cdots \epsilon_{l_1} \epsilon_{r_1} \cdots \epsilon_{r_n} [s] = \epsilon_{r_1} \cdots \epsilon_{r_n} \epsilon_s \epsilon_{l_m} \cdots \epsilon_{l_1} \epsilon_{r_1} \cdots \epsilon_{r_n} [s] \\
     &  = \epsilon_{r_1} \cdots \epsilon_{r_n} \epsilon_{l_m} \cdots \epsilon_{l_1} \epsilon_{r_1} \cdots \epsilon_{r_n} \epsilon_s [s] = \epsilon_{r_1} \cdots \epsilon_{r_n} \epsilon_{l_1} \cdots \epsilon_{l_m} [s]  = \epsilon_{r_1} \cdots \epsilon_{r_n} [s] = \alpha.
\end{align*}

Therefore, by the uniqueness of the writing of elements in $S(G)$ in the standard form, we have $\{ r_1, \ldots , r_n \} \cup \{ l_1, \ldots , l_m \} = \{ r_1, \ldots , r_n \}$, thus, $\{ l_1, \ldots , l_m \} \subseteq \{ r_1, \ldots , r_n \}$.

For the reverse inclusion note that
\begin{center}$
\begin{array}{rcl}
     \beta\alpha\beta  &  = & [s^{-1}]\epsilon_{l_m} \cdots \epsilon_{l_1} \epsilon_{r_1} \cdots \epsilon_{r_n} [s] [s^{-1}] \epsilon_{l_m} \cdots \epsilon_{l_1} = [s^{-1}]\epsilon_{l_1} \cdots \epsilon_{l_m} \epsilon_{r_1} \cdots \epsilon_{r_n}\\
     & =  &  \epsilon_{r_1} \cdots \epsilon_{r_n} \epsilon_{l_m} \cdots \epsilon_{l_1} \epsilon_{r_1} \cdots \epsilon_{r_n} \epsilon_s [s] = [s^{-1}] \epsilon_{l_1} \cdots \epsilon_{l_m} = \beta.
\end{array}$
\end{center}

The computation is similar to the case  $\alpha\beta\alpha$. From the uniqueness of the writing in the standard form, we get $\{ l_1, \ldots , l_m \} \supseteq \{ r_1, \ldots , r_n \}$. Then $\{ l_1, \ldots , l_m \} = \{ r_1, \ldots , r_n \}$, that is, $\beta = \alpha^*$.
\end{proof}

\begin{exe}
(Disjoint union of groups) Let $G_i$ be groups for $i = 1 , \ldots, n$, and $G = \bigcup\limits_{i = 1}^{n} G_i$ the groupoid constructed by the disjoint union of the $G_i$. Then $S(G) = \bigcup\limits_{i = 1}^{n} S(G_i)$. If the $G_i$'s are finite, then $|G| = \sum\limits_{i = 1}^n |G_i| \Rightarrow |S(G)| = \sum\limits_{i = 1}^n |S(G_i)|$.
\end{exe}

\begin{obs} \label{obsgilbert}
By \cite{gilbert2005actions}, given an ordered groupoid $G$, we can construct the Birget-Rhodes expansion $G^{BR}$ of $G$ as it follows. Consider
\begin{align*}
    F_e(G) = \{ H \subseteq G : e \in H, H \text{ is finite and if } h \in H, \text{ then } r(h) = e \}
\end{align*}
and
\begin{align*}
    F_*(G) = \bigcup_{e \in G_0} F_e(G).
\end{align*}

Then $G^{BR}$ is an ordered groupoid with underlying set
\begin{align*}
    G^{BR} = \{ (U,g) : U \in F_*(G), g \in U \}.
\end{align*}
The composition is given by $(U,g)(V,h) = (U,gh)$ and it is defined when $U = gV$ and $\exists gh$. Also, $(U,g) \leq (V,h)$ if, and only if, $g \leq h$ and $U \supseteq \{ (r(g)|v) : v \in V \}$, where the notation $(e|v)$ means the correstriction of $v$ in $e$.

Since every groupoid can be seen as an ordered groupoid taking equality as the partial order, the construction above can be applied in the nonordered case. Let $G$ be any groupoid. Then $G^{BR}$ is still an ordered groupoid with the same partial product but now ordering $(U,g) \leq (V,h)$ if, and only if, $g = h$ and $U \supseteq V$. With this ordering, we can see that $G^{BR}$ is in fact a locally complete inductive groupoid as defined in \cite{dewolf2018ehresmann}. 

By \cite{dewolf2018ehresmann}, every locally complete inductive groupoid is related with an inverse category by an Ehresmann-Schein-Nambooripad-type theorem. We will denote by $\mathbb{S}(\mathcal{G})$ the inverse category associated with a locally complete inductive groupoid $\mathcal{G}$ via this correspondence.

Finally, it is easy to see that $S(G)$ is not just an inverse semigroupoid, but an inverse category, and it is isomorphic to $\mathbb{S}(G^{BR})$ via $\epsilon_{r_1} \cdots \epsilon_{r_n}[g] \mapsto (\{r_1, \ldots, r_n, r(g), g\},g).$

This argumentation generalizes the results of \cite[Section 2]{kellendonk2004partial} to the case of groupoids. 

Furthermore, every inverse semigroupoid $S$ gives rise to an inverse semigroup as it follows. Consider $S^0 = S \cup \{ 0 \}$, where $0$ is a symbol that is not an element of $S$. We define an operation in $S^0$ by
\begin{align*}
    st = \begin{cases}
      st, & \text{ if } s,t \in S \text{ and } \exists st, \\
      0, & \text{ otherwise.}
    \end{cases}
\end{align*}

In \cite[Proposition 6.16]{lawson2006expansions}, it was defined the Birget-Rhodes expansion $S^{Pr}$ of an inverse semigroup $S$. We observe that $(G^0)^{Pr}$ is isomorphic to $(\mathbb{S}(G^{BR}))^0$ via the isomorphism
\begin{align*}
    (A, s) \in (G^0)^{Pr} \mapsto (A,s) & \in (\mathbb{S}(G^{BR}))^0, \text{ if } s \neq 0, \\
    (\{0\},0) \in (G^0)^{Pr}  & \mapsto 0 \in (\mathbb{S}(G^{BR}))^0.
\end{align*}
\end{obs}

\section{Partial Actions of Groupoids and Actions of Inverse Semigroupoids}

In this section we treat about inverse semigroupoid actions and  partial groupoid  actions. More about  partial groupoid actions can be found in \cite{bagio2012partial}. 

Let $S$ be an inverse semigroupoid with set of idempotent elements $E(S)$ and let $X$ be a non-empty set. An \emph{action of $S$ into $X$} is a pair $\beta = (\{ E_g \}_{g \in S}, \{ \beta_g : E_{g^{-1}} \rightarrow E_g \}_{g \in S})$, where for all $g \in S$, $E_g = E_{gg^*}$ is a subset of $X$ and $\beta_g$ is a bijection. Moreover, the following conditions hold:

\begin{enumerate}
    \item[(A1)] $\beta_{e}$ is the identity map of $E_{e}$, for all $e \in E(S)$ and
    
    \item[(A2)] $\beta_g \circ \beta_h = \beta_{gh}$, for all $g, h \in S$ such that $\exists gh$.
\end{enumerate}

According to \cite{bagio2012partial}, a \emph{partial action $\alpha$ of a groupoid $G$ on a set $X$} is a pair $\alpha = (\{ D_g \}_{g \in G}, \{ \alpha_g : D_{g^{-1}} \rightarrow D_g \}_{g \in G})$, where, for all $g \in G$, $D_{r(g)}$ is a subset of $X$, $D_g$ is a subset of $D_{r(g)}$ and $\alpha_g$ is bijective. Besides that, the following conditions hold:

\begin{enumerate}
    \item[(PA1)] $\alpha_e = I_{D_e}$, for all $e \in G_0$;
    
     \item[(PA2)] $ \alpha_g(D_{g^{-1}} \cap D_h) = D_g \cap D_{gh} $, for all $(g,h) \in G^2$; and
    
    \item[(PA3)] $\alpha_g(\alpha_h(x)) = \alpha_{gh}(x)$, for all $x \in D_{h^{-1}} \cap D_{(gh)^{-1}}$, for all $(g,h) \in G^2$.
\end{enumerate}

The next lemma characterizes partial groupoid actions. Note that this lemma tells us that every partial groupoid action is also a partial groupoid representation (in the sense of \cite{exel1998partial}).

Let $X$ be a non-empty set. Recall that
\begin{align*}
    I(X) = \{ f : A \to B : f \text{ is a bijection}, A,B \subseteq X\}
\end{align*}
is an inverse semigroup regarding to usual 
composition. 

\begin{lema}
\label{lema1} Let $G$ be a groupoid and $X$ a non-empty set. Consider $\alpha : G \rightarrow I(X)$, where we denote $\alpha(s) = \alpha_s$. Then $\alpha$ is a partial groupoid action of $G$ on $X$ if, and only if,

\begin{enumerate}
    \item[\emph{(i)}] $\alpha_s \circ \alpha_t \circ \alpha_{t^{-1}} = \alpha_{st} \circ \alpha_{t^{-1}}$ and
    
    \item[\emph{(ii)}] $\alpha_e = I_{\text{dom}(\alpha_e)}$, for all $e \in G_0$.
\end{enumerate}

In this case, it also holds

\begin{enumerate}
    \item[\emph{(iii)}] $\alpha_{s^{-1}} \circ \alpha_s \circ \alpha_t = \alpha_{s^{-1}} \circ \alpha_{st}$.
\end{enumerate}
\end{lema}

\begin{proof}
$(\Rightarrow):$ Assume that $\alpha = \{ \{ D_s \}_{s \in G}, \{ \alpha_s \}_{s \in G} \}$ is a partial groupoid action. Since (PA1) $\Leftrightarrow$ (ii), it only remains for us to show (i). We have that dom$(\alpha_s \circ \alpha_t \circ \alpha_{t^{-1}}) = \text{dom}(\alpha_s \circ I_{D_t}) = \text{dom}(\alpha_s) \cap D_t = D_t \cap D_{s^{-1}}$ and dom$(\alpha_{st} \circ \alpha_{t^{-1}}) = \alpha_t(D_{t^{-1}} \cap D_{(st)^{-1}})$, for all $(s,t) \in G^2$. Taking $s = t$ and $t = (st)^{-1}$ in (PA2) we obtain $\alpha_t(D_{t^{-1}} \cap D_{(st)^{-1}}) = D_t \cap D_{s^{-1}}$, from where dom$(\alpha_s \circ \alpha_t \circ \alpha_{t^{-1}}) = \text{dom}(\alpha_{st} \circ \alpha_{t^{-1}})$.

Similarly, Im$(\alpha_s \circ \alpha_t \circ \alpha_{t^{-1}}) = \alpha_s(D_t \cap D_{s^{-1}})$ and Im$(\alpha_{st} \circ \alpha_{t^{-1}}) = \alpha_{st}(D_{t^{-1}} \cap D_{(st)^{-1}}) = D_{st} \cap D_s$,

Besides that, from (PA2), we have $\alpha_s(D_t \cap D_{s^{-1}}) = D_s \cap D_{st}$.

Therefore, Im$(\alpha_s \circ \alpha_t \circ \alpha_{t^{-1}}) = \text{Im}(\alpha_{st} \circ \alpha_{t^{-1}})$.

Finally, take $x \in D_t \cap D_{s^{-1}} = \text{dom}(\alpha_s \circ \alpha_t \circ \alpha_{t^{-1}})$. Since $x \in D_t \cap D_{s^{-1}}$, we have that $\alpha_{t^{-1}}(x) = y \in \alpha_{t^{-1}}(D_t \cap D_{s^{-1}}) = D_{t^{-1}} \cap D_{(st)^{-1}}$. Hence $\alpha_s(\alpha_t(\alpha_{t^{-1}}(x))) = \alpha_s(\alpha_t(y)) = \alpha_{st}(y) = \alpha_{st}(\alpha_{t^{-1}}(x))$.

$(\Leftarrow):$ Since $(t, t^{-1}), (t^{-1},t) \in G^2$ for all $t \in G$, we have

\begin{center}$
    \begin{array}{rl}
     \alpha_t \circ \alpha_{t^{-1}} \circ \alpha_t  &  = \alpha_{tt^{-1}} \circ \alpha_t = \alpha_{r(t)} \circ \alpha_t = I_{\text{dom}(\alpha_{r(t)})} \circ \alpha_t = \alpha_t.
     \end{array}$
\end{center}

Similarly, $\alpha_{t^{-1}} \circ \alpha_t \circ \alpha_{t^{-1}} = \alpha_{t^{-1}}$.

The uniqueness of inverse element on inverse semigroupoids assures us that $\alpha_t^{-1} = \alpha_{t^{-1}}$.

Define $D_t = \text{Im}(\alpha_t)$. Then dom$(\alpha_t) = \text{Im}(\alpha_t^{-1}) = \text{Im}(\alpha_{t^{-1}}) = D_{t^{-1}}$.

Therefore, $\alpha_t : D_{t^{-1}} \rightarrow D_t$. Note that given $s \in G$ we have $D_s \subseteq D_{r(s)}$. In fact,

\begin{center}
    $\alpha_{r(s)} \circ \alpha_{s} \circ \alpha_{s^{-1}} = \alpha_{r(s)s} \circ \alpha_{s^{-1}} = \alpha_s \circ \alpha_{s^{-1}} = I_{D_s}$,
\end{center}from where

\begin{center}
    $D_{r(s)} \cap D_s = \text{dom}(\alpha_{r(s)} \circ \alpha_{s} \circ \alpha_{s^{-1}}) = \text{dom}(I_{D_s}) = D_s$.
\end{center}

For all $(s,t) \in G^2$ we have by (i) and (ii) that

\begin{center}
    $\alpha_{t^{-1}} \circ \alpha_{s^{-1}} = \alpha_{t^{-1}} \circ \alpha_{s^{-1}} \circ \alpha_{r(s)} = \alpha_{t^{-1}} \circ \alpha_{s^{-1}} \circ \alpha_{ss^{-1}} =  \alpha_{t^{-1}} \circ \alpha_{s^{-1}} \circ \alpha_s \circ \alpha_{s^{-1}} = \alpha_{t^{-1}s^{-1}} \circ \alpha_{s} \circ \alpha_{s^{-1}}$.
\end{center}

In particular, the domains of these bijections are equal. We have dom$(\alpha_{t^{-1}} \circ \alpha_{s^{-1}}) = \alpha_s(D_{s^{-1}} \cap D_t)$. In fact, given $x \in \text{dom}(\alpha_{t^{-1}} \circ \alpha_{s^{-1}})$ we obtain $x \in \text{dom}(\alpha_{s^{-1}}) = D_s$ and $\alpha_{s^{-1}}(x) \in \text{dom}(\alpha_{t^{-1}}) = D_t$. But $x \in D_s$ implies that $\alpha_{s^{-1}}(x) \in \alpha_{s^{-1}}(D_s) = D_{s^{-1}}$. Thus $\alpha_{s^{-1}}(x) \in D_t \cap D_{s^{-1}}$, from where $x \in \alpha_s(D_{s^{-1}} \cap D_t)$. Hence, dom$(\alpha_{t^{-1}} \circ \alpha_{s^{-1}}) \subseteq \alpha_s(D_{s^{-1}} \cap D_t)$.

For the reverse inclusion, take $x \in \alpha_s(D_{s^{-1}} \cap D_t)$. We need to show that $x \in \text{dom}(\alpha_{t^{-1}} \circ \alpha_{s^{-1}})$, that is, $x \in \text{dom}(\alpha_{s^{-1}}) = D_s$ and that $\alpha_{s^{-1}}(x) \in \text{dom}(\alpha_{t^{-1}}) = D_t$.

Notice that $D_{s^{-1}} \cap D_t \subseteq D_{s^{-1}}$, thus $\alpha_s(D_{s^{-1}} \cap D_t) \subseteq \alpha_s(D_{s^{-1}}) = D_s$. Hence $x \in D_s = \text{dom}(\alpha_{s^{-1}})$ and $\alpha_{s^{-1}}(x)$ is well defined. Since $x \in \alpha_s(D_{s^{-1}} \cap D_t)$ we have

\begin{center}
    $\alpha_{s^{-1}}(x) \in \alpha_{s^{-1}}(\alpha_s(D_{s^{-1}} \cap D_t)) = D_{s^{-1}} \cap D_t \subseteq D_t = \text{dom}(\alpha_{t^{-1}})$.
\end{center} 

On the other hand,

\begin{center}
    dom$(\alpha_{t^{-1}s^{-1}} \circ \alpha_s \circ \alpha_{s^{-1}}) = \text{dom}(\alpha_{t^{-1}s^{-1}} \circ I_{D_s}) = \text{dom}(\alpha_{t^{-1}s^{-1}}) \cap D_s = D_{st} \cap D_s$,
\end{center}from where $\alpha_s(D_{s^{-1}} \cap D_t) = D_{st} \cap D_s$.

Now take $x \in D_{t^{-1}} \cap D_{t^{-1}s^{-1}}$. Since $x \in D_{t^{-1}}$ there exists $y \in D_t$ such that $\alpha_{t^{-1}}(y) = x$. Hence $\alpha_s(\alpha_t(x)) = \alpha_s(\alpha_t(\alpha_{t^{-1}}(y))) = \alpha_{st}(\alpha_{t^{-1}}(y)) = \alpha_{st}(x)$.

Since $x$ is arbitrary, we have that $\alpha_s(\alpha_t(x)) = \alpha_{st}(x), \forall x \in D_{t^{-1}} \cap D_{t^{-1}s^{-1}}$.
Therefore $\alpha = \{ \{ D_s \}_{s \in G}, \{ \alpha_s \}_{s \in G} \}$ is a partial action. Besides that, given $(s,t) \in G^2$, we have

\begin{center}
    $\alpha_{s^{-1}} \circ \alpha_s \circ \alpha_t = (\alpha_{t^{-1}} \circ \alpha_{s^{-1}} \circ \alpha_s)^{-1} = (\alpha_{t^{-1}s^{-1}} \circ \alpha_s)^{-1} = (\alpha_{(st)^{-1}} \circ \alpha_s)^{-1} = \alpha_{s^{-1}} \circ \alpha_{st}$.
\end{center}
\end{proof}

The next lemma gives us a characterization of the idempotents elements in $S(G)$.

\begin{lema}
\label{lemaidem}
Let $G$ be a groupoid. An element $\alpha \in S(G)$ is idempotent if and only if the standard form of $\alpha$ is $\alpha = \epsilon_{r_1} \cdots \epsilon_{r_n}$.
\end{lema}

\begin{proof}
We already know that the elements of the form $\epsilon_{r_1} \ldots \epsilon_{r_n}$, with $r_i \in X_{r_n}$, for all $i = 1, \ldots , n$, are idempotents. 

Now, given $\alpha \in S(G)$, we write $\alpha = \epsilon_{r_1} \ldots \epsilon_{r_n}[s] $ on the standard form, with $r_1, \ldots, r_n \in X_s$. Suppose $\alpha$ idempotent. Then $\exists \alpha^2$ and $\alpha^2 = \alpha$. That is, $d(s) = r(r_1) = r(s)$, and
\begin{align*}
     \alpha^2 & = (\epsilon_{r_1} \ldots \epsilon_{r_n}[s])(\epsilon_{r_1} \ldots \epsilon_{r_n}[s]) = \epsilon_{r_1} \ldots \epsilon_{r_n}\epsilon_{sr_1} \ldots \epsilon_{sr_n}[s]^2 = \epsilon_{r_1} \ldots \epsilon_{r_n}\epsilon_{sr_1} \ldots \epsilon_{sr_n}\epsilon_s[s^2]. 
\end{align*}

From the equality $\alpha^2 = \alpha$ and the uniqueness of the writing on the standard form, we obtain $s^2 = s$, that is, $s = r(s)$, from where $sr_i = r(s)r_i = r(r_i)r_i = r_i$. Then, $\alpha = \epsilon_{r_1} \cdots \epsilon_{r_n}$ as we expected.
\end{proof}


\begin{teo} \label{teoremaacoes}
Let $X$ be a set and $G$ a groupoid. There is a one-to-one correspondence between

\begin{enumerate}
    \item[\emph{(a)}] partial groupoid actions of $G$ on $X$; and
    
    \item[\emph{(b)}] inverse semigroupoid actions of $S(G)$ on $X$.
\end{enumerate}
\end{teo}

\begin{proof}
(a) $\Rightarrow$ (b): Let $\alpha : G \rightarrow I(X)$ be a partial groupoid action. From Lemma \ref{lema1} and Proposition \ref{prop1}, there is $\beta : S(G) \rightarrow I(X)$ semigroupoid homomorphism such that $\beta([g]) = \alpha(g)$. Therefore, if $\exists gh$, then $\beta_{[g][h]} = \beta([g][h]) = \beta([g])\beta([h]) = \beta_{[g]}\beta_{[h]}$.

The idempotents in $S(G)$ are precisely elements of the form $\alpha = \epsilon_{r_1} \cdots \epsilon_{r_n} \in S(G),$ by Lemma \ref{lemaidem}. Since $\beta$ is a homomorphism, it is enough to show (A1) for elements of the form $\epsilon_r \in S(G)$. 

Define $E_{[g]} = \text{Im}(\beta_{[g]})$. Let $g \in G$. In this way,

\begin{center}
$E_{\epsilon_g} = \text{Im}(\beta_{\epsilon_g}) = \text{Im}(\beta_{[g][g^{-1}]}) = \text{Im}(\beta_{[g]} \circ \beta_{[g^{-1}]}) = \text{Im}(\alpha_g \circ \alpha_{g^{-1}}) = D_g$.    
\end{center}

On the other hand, $E_{[g]} = \text{Im}(\beta_{[g]}) = \text{Im}(\alpha_g) = D_g$, from where $E_{[g]} = E_{[g][g]*} = E_{[g][g^{-1}]} = E_{\epsilon_g}$.

Finally, note that

\begin{center}
     $\beta_{\epsilon_g} = \beta_{[g][g^{-1}]} = \beta_{[g]} \circ \beta_{[g^{-1}]} = \alpha_g \circ \alpha_{g^{-1}} = I_{D_g} = I_{E_{\epsilon_g}}$,
\end{center}
what guarantees that $\beta$ is an inverse semigroupoid action of $S(G)$ on $X$.

(b) $\Rightarrow$ (a): Let $\beta$ be an action of $S(G)$ on $X$. Define $\alpha(g) = \beta([g])$, for all $g \in G$. Note that if $(s,t) \in G^2$ and $e \in G_0$ then
\begin{align*}
    \alpha(s) \circ \alpha(t) \circ \alpha(t^{-1}) & = \beta([s]) \circ \beta([t]) \beta([t^{-1}])  = \beta([s][t][t^{-1}]) = \beta([st][t^{-1}]) = \beta([st]) \circ \beta([t^{-1}])\\
    & = \alpha(st) \circ \alpha(t^{-1}).
\end{align*}

Besides that, $r(e) = d(e) = e = e^2$, from where $e$ is idempotent and $[e] = \epsilon_e$ is as well. Hence,
\begin{align*}
     \alpha(e)& = \beta([e]) = I_{E_{[e]}} = I_{\text{dom}(\beta([e]))} = I_{\text{dom}(\alpha(e))}.
\end{align*}

That concludes the proof by Lemma \ref{lema1}.
\end{proof}

\section{Crossed Products}

Let us remember some results of partial groupoid actions theory. 

\begin{defi} \label{defskew} \cite[Section 3]{bagio2010partial}
Let $R$ be an algebra, $G$ a groupoid and $\alpha = (\{ D_g \}_{g \in G}, \{ \alpha_g \}_{g \in G})$ a partial action of $G$ on $R$. We define the algebraic crossed product $R \ltimes_{\alpha}^a G$ by
\begin{center}
    $R \ltimes_{\alpha}^a G = \left \{ \sum\limits_{g \in G}^{\text{finite}} a_g\delta_g \right \} = \bigoplus\limits_{g \in G} D_g\delta_g$
\end{center}
where $\delta_g$ are symbols that represent the position of the element $a_g$ in the sum. The addition is usual and the product is given by

\begin{center}
$(a_g\delta_g)(b_h\delta_h) = 
     \begin{cases}
       \alpha_g(\alpha_{g^{-1}}(a_g)b_h)\delta_{gh}, &\quad\text{if } (g,h) \in G^2. \\
       0, &\quad\text{otherwise}
     \end{cases}    $
\end{center}
and linearly extended.
\end{defi}

The next result gives a condition to the associativity of the algebraic crossed product. 

\begin{propo} \label{teoassoc}
Let $R$ be an algebra, $G$ a groupoid and $\alpha = (\{ D_g \}_{g \in G},$ $\{ \alpha_g \}_{g \in G})$ a partial action of $G$ on $R$. If $R$ is semiprime, then the algebraic crossed product $R \ltimes_{\alpha}^a G$ is associative.
\end{propo}

The proof can be found in \cite[Proposition 3.1]{bagio2010partial}, adding that every groupoid can be seen as an ordered groupoid regarding to equality as partial order.

We are interested in defining the algebraic crossed product of a action of an inverse semigroupoid on an algebra. Exel and Vieira made in \cite{exel2010actions} the first study in the case of groups.

\begin{propo} \label{prop9}
Let $\beta$ be an action of an inverse semigroupoid $S$ on an algebra $R$. If $\exists st$ then

\begin{enumerate}
    \item[\emph{(i)}] $\beta_{s^{-1}} = \beta_s^{-1}$
    
    \item[\emph{(ii)}] $\beta_s(E_t) = \beta(E_t \cap E_{s^{-1}}) = E_{st}$
    
    \item[\emph{(iii)}] $E_{st} \subseteq E_s$
\end{enumerate}
\end{propo}

\begin{proof}
(i): Since $ss^{-1} \in E(S)$, we have $I_{E_s} = I_{E_{ss^{-1}}} = \beta_{ss^{-1}} = \beta_s \circ \beta_{s^{-1}}$, that is, $\beta_{s^{-1}} = \beta_s^{-1}$.

\noindent (ii): Just observe that $\beta_s(E_t) = \beta_s(E_t \cap E_{s^{-1}}) = \beta_{s^{-1}}^{-1}(E_t \cap E_{s^{-1}}) = \text{dom}(\beta_{t^{-1}} \circ \beta_{s^{-1}}) = \text{dom}(\beta_{(st)^{-1}}) = E_{st}$.

\noindent (iii): Follows from (ii).
\end{proof}

\begin{defi} \label{defl}
Let $\beta = (\{ E_s \}_{s \in S}, \{ \beta_s \}_{s \in S})$ be an action of an inverse semigroupoid $S$ on an algebra $R$. We define the crossed product by
\begin{center}
    $L = \left \{ \sum\limits_{s \in S}^{\text{finite}} a_s\delta_s \right \} = \bigoplus\limits_{s \in S} E_s\delta_s$.
\end{center}

The addition is usual and the product is given by

\begin{center}
$(a_s\delta_s)(b_t\delta_t) = 
     \begin{cases}
       \beta_s(\beta_{s^*}(a_s)b_t)\delta_{st}, &\quad\text{if } \exists st. \\
       0, &\quad\text{otherwise}
     \end{cases}    $
\end{center}
and linearly extended.

\end{defi}

\begin{propo}
Let $R$ be an algebra, $S$ an inverse semigroupoid and $\beta = (\{ E_s \}_{s \in S}, \{ \beta_s \}_{s \in S})$ an action of $S$ on $R$. If $R$ is semiprime, then the crossed product $L$ defined in \ref{defl} is associative.
\end{propo}

\begin{proof}
The proof follows similarly the arguments given in \cite[Proposition 3.1]{bagio2010partial}.
\end{proof}

Finally we can define the algebraic crossed product of the inverse semigroupoid $S$ on the semiprime algebra $R$. Recall from \cite{liu2016free} that every inverse semigroupoid has a natural partial order relation: given $s, t$ in an inverse semigroupoid $S$, we have $$r \leq t \Leftrightarrow \exists i \in E(S), \text{ such that } \exists it \text{ and } r = it.$$

\begin{defi}
Let $R$ be a semiprime algebra, $S$ an inverse semigroupoid and $\beta = (\{ E_s \}_{s \in S}, \{ \beta_s \}_{s \in S})$ an action of $S$ on $R$. Define $N = \langle a\delta_r - a\delta_t : a \in E_r, r \leq t \rangle$. The algebraic crossed product of $S$ on $R$ by $\beta$ is

\begin{center}
    $R \ltimes_{\beta}^a S = L/N$.
\end{center}
\end{defi}

\begin{obs}
The algebraic crossed product of an inverse semigroupoid $S$ on a semiprime algebra $R$ by an action $\beta$ is well defined.  If $r \leq t$, then $E_r \subseteq E_t$, by Proposition \ref{prop9}. So we can write $a\delta_t$ even though $a \in E_r$.
\end{obs}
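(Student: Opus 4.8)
The plan is to isolate the single point at which well-definedness could fail and to settle it by a direct appeal to the monotonicity of the domains under the order $\leq$. The algebra $L = \bigoplus_{s \in S} E_s\delta_s$ is already known to be associative, and $N$ is defined as the ideal generated by the elements $a\delta_r - a\delta_t$ with $a \in E_r$ and $r \leq t$; being the ideal generated by a subset of $L$, $N$ is automatically a two-sided ideal, so $R \ltimes_{\beta}^a S = L/N$ is an algebra as soon as those generators are genuine elements of $L$. In $L$ the symbol $a\delta_r$ is legitimate exactly because $a \in E_r$, so the only thing requiring proof is that the second symbol $a\delta_t$ also lies in $L$, i.e. that $a \in E_t$. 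Everything therefore reduces to the containment $E_r \subseteq E_t$ whenever $r \leq t$.

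First I would unwind the relation $r \leq t$ by means of the natural partial order on an inverse semigroupoid recalled at the opening of this section: $r \leq t$ means there exists an idempotent $i \in E(S)$ with $\exists ti$ and $r = ti$. Then I would apply Proposition \ref{prop9}(iii) to the product $ti$, which exists by hypothesis, obtaining $E_r = E_{ti} \subseteq E_t$. Consequently $a \in E_r \subseteq E_t$, so $a\delta_t \in L$ and each generator $a\delta_r - a\delta_t$ of $N$ is a bona fide element of $L$, which is exactly the well-definedness claim.

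The step I expect to carry the content is the reduction itself: recognizing that the entire assertion is equivalent to the order-monotonicity $E_r \subseteq E_t$ of the domain ideals, and that this monotonicity is precisely Proposition \ref{prop9}(iii) once the meaning of $r \leq t$ is made explicit. No genuine computation is involved beyond this; I would note in passing that any of the equivalent descriptions of $r \leq t$ (for instance $s = ts^*s$ or $s = ss^*t$) leads to the same containment through Proposition \ref{prop9}, so the choice of characterization is immaterial.
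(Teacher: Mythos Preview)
Your proposal is correct and follows essentially the same approach as the paper: unwind $r \leq t$ as $r = ti$ for some idempotent $i$, then invoke Proposition~\ref{prop9}(iii) to get $E_r = E_{ti} \subseteq E_t$, so that $a\delta_t$ is a legitimate element of $L$. Your additional framing (that this is the only point at which well-definedness could fail) is accurate and slightly more explicit than the paper, but the core argument is identical.
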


The natural partial order of an inverse semigroupoid plays an important role in the crossed product. Since we want to work with $S(G)$, we must understand its partial order.

\begin{lema}
Let $G$ be a groupoid. If $\alpha, \beta \in S(G)$ are such that $\alpha = \epsilon_{r_1} \cdots \epsilon_{r_n} [s]$, $\beta = \epsilon_{\ell_1} \cdots \epsilon_{\ell_m} [t]$ and $\alpha \leq \beta$, then $s = t$ and $\{ \ell_1 , \ldots , \ell_m \} \subseteq \{ r_1 , \ldots , r_n \}$.
\end{lema}

\begin{proof}
Since $\alpha \leq \beta$, there is an idempotent $\gamma \in S(G)$ such that $\exists \beta\gamma$ and $\alpha = \beta\gamma$. From Lemma \ref{lemaidem} we know that $\gamma = \epsilon_{p_1} \cdots \epsilon_{p_k}$ on the standard form (where $p_1, \ldots , p_k \in X_{p_1}$). Then,

\begin{center}
    $\epsilon_{r_1} \cdots \epsilon_{r_n} [s] = \alpha = \beta\gamma = \epsilon_{\ell_1} \cdots \epsilon_{\ell_m} [t] \epsilon_{p_1} \cdots \epsilon_{p_k} = \epsilon_{\ell_1} \cdots \epsilon_{\ell_m} \epsilon_{tp_1} \cdots \epsilon_{tp_k}[t]$.
\end{center}

The result follows from the uniqueness of decomposition on standard form.
\end{proof}

Notice that this ordering coincides with the ordering in the Birget-Rhodes expansion $G^{BR}$ that was presented in Remark \ref{obsgilbert}.

\begin{lema} \label{lema5}
Let $G$ be a groupoid. Given $r_1, \ldots , r_n, g, h \in G$, $r_i \in X_g$ for all $i = 1 , \ldots , n$, $d(g) = r(h)$, we have

\begin{enumerate}
    \item[\emph{(i)}] $E_{[g][h]} = E_{[g]} \cap E_{[gh]}$
    
    \item[\emph{(ii)}] $E_{\epsilon_{r_1} \cdots \epsilon_{r_n}[g]} \subseteq E_{[g]}$
\end{enumerate}
\end{lema}

\begin{proof}
(i): $
\begin{array}{r}
     E_{[g][h]}  =  E_{[g][g^{-1}][g][h]} =  E_{[g][g^{-1}][gh]} =  \beta_{[g]}(E_{[g^{-1}][gh]}) =  \beta_{[g]} \circ \beta_{[g^{-1}]}(E_{[gh]}) =  E_{[g]} \cap E_{[gh]}.
\end{array}$

(ii): Follows from (i).
\end{proof}

The last lemma before the principal result of this section will give us two important relations on the crossed product.

\begin{lema}
Let $R$ be a semiprime algebra, $G$ a groupoid and $\beta = (\{ E_s \}_{s \in S(G)}, \{ \beta_s \}_{s \in S(G)})$ an action of $S(G)$ on $R$. For $r_1, \ldots , r_n, g, h \in G$, such that $\exists gh$, $r(r_i) = r(g)$, for all $i = 1, \ldots , n$, the equalities below hold in $R \ltimes_\beta^a S(G)$:

\begin{enumerate}
    \item[\emph{(i)}] $\overline{a\delta_{[g][h]}} = \overline{a\delta_{[gh]}}$, for all $a \in E_{[g][h]}$
    
    \item[\emph{(ii)}] $\overline{a\delta_{\epsilon_{r_1} \cdots \epsilon_{r_n}[g]}} = \overline{a\delta_{[g]}}$, for all 
\end{enumerate}

\end{lema}

\begin{proof}
(i): Since $E_{[g][h]} \subseteq E_{[gh]}$ from Lemma \ref{lema5}, we can write $a\delta_{[gh]}$. Just note that 

\begin{center}
$[g][h] \leq [gh]$, since $[g][h] = [g][h][h^{-1}][h] = [gh][h^{-1}][h] = [gh]\epsilon_{h^{-1}}$    
\end{center}
and $\epsilon_{h^{-1}}$ is idempotent. Then, $a\delta_{[g][h]} - a\delta_{[gh]} \in N$.

(ii): Since $E_{\epsilon_{r_1} \cdots \epsilon_{r_n}[g]} \subseteq E_{[g]}$ we can write $a\delta_{[g]}$. Now note that $\epsilon_{r_1} \cdots \epsilon_{r_n}[g] = [g]\epsilon_{g^{-1}r_1} \cdots \epsilon_{g^{-1}r_n}$ and $\epsilon_{g^{-1}r_1} \cdots \epsilon_{g^{-1}r_n} \in E(S(G))$, from where $\epsilon_{r_1} \cdots \epsilon_{r_n}[g] \leq [g]$. Hence $a\delta_{\epsilon_{r_1} \cdots \epsilon_{r_n}[g]} - a\delta_{[g]} \in N$.
\end{proof}

\begin{teo}
Let $G$ be a groupoid, $R$ a semiprime algebra and $\alpha$ a partial action of $G$ on $R$. Let also $\beta$ be the partial action of $S(G)$ on $A$ associated with $\alpha$ as in the Theorem \ref{teoremaacoes}. So $R \ltimes_{\alpha}^a G \cong R \ltimes_{\beta}^a S(G)$.
\end{teo}

\begin{proof}
Define 

\begin{minipage}[b]{0.4\linewidth}
\begin{gather*}
    \varphi: R \ltimes_{\alpha}^a G \rightarrow R \ltimes_{\beta}^a S(G) \text{\quad \quad \quad  and} \\
    a\delta_g \mapsto \overline{a\delta_{[g]}}
\end{gather*}
\end{minipage}
\begin{minipage}[b]{0.3\linewidth}
\begin{gather*}
    \psi: L \rightarrow R \ltimes_{\alpha}^a G \\
    a\delta_\gamma \mapsto a\delta_{\partial(\gamma)}.
\end{gather*}
\end{minipage}

It is easy to see that $\varphi$ and $\psi$ are algebra homomorphisms and that $N \subseteq \ker \psi$. Therefore exists a unique homomorphism $\overline{\psi} : R \ltimes_{\beta}^a S(G) \rightarrow R \ltimes_{\alpha}^a G$ such that $\overline{\psi}(\overline{a\delta_{\gamma}}) = a\delta_{\partial(\gamma)}$. Notice that $\varphi$ and $\overline{\psi}$ are inverses, consequently isomorphisms.
\end{proof}

\section{Partial Groupoid Representations and Inverse Semigroupoid Representations}

In this section we define inverse semigroupoid representations on Hilbert spaces and partial groupoid representations on Hilbert spaces.

\begin{defi}
Let $S$ be an inverse semigroupoid and $H$ a Hilbert space. A representation of $S$ on $H$ is a map $\pi : S \rightarrow \mathcal{B}(H)$ such that

\begin{enumerate}
    \item[(R1)] if $\exists \alpha \beta$, then $\pi(\alpha\beta) = \pi(\alpha)\pi(\beta)$;
    
    \item[(R2)] $\pi(\alpha^*) = \pi(\alpha)^*$; and
    
    \item[(R3)] $\pi(\alpha\alpha^*)\pi(\alpha)$.
\end{enumerate}
\end{defi}

\begin{defi}
Let $G$ be a groupoid and $H$ a Hilbert space. A partial representation of $G$ on $H$ is a map $\pi : G \rightarrow \mathcal{B}(H)$ such that

\begin{enumerate}
    \item[(PR1)] $\pi(s)\pi(t)\pi(t^{-1}) = \pi(st)\pi(t^{-1})$, for all $(s,t) \in G^2$;
    
    \item[(PR2)] $\pi(s^{-1}) = \pi(s)^*$; and
    
    \item[(PR3)] $\pi(r(s))\pi(s) = \pi(s)$, for all $e \in G_0$.
\end{enumerate}

In this case, it also holds

\begin{center}
    $\pi(s^{-1})\pi(s)\pi(t) = \pi(s^{-1})\pi(st)$.
\end{center}

\end{defi}

\begin{propo} \label{proprep}
Let $G$ be an groupoid and $H$ a Hilbert space. There is a one-to-one correspondence between

\begin{enumerate}
    \item[\emph{(a)}] partial groupoid representations of $G$ on $H$; and
    
    \item[\emph{(b)}] inverse semigroupoid representations of $S(G)$ on $H$.
\end{enumerate}
\end{propo}

\begin{proof}
A partial groupoid representation of $G$ on $H$ holds the conditions of Proposition \ref{prop1}. Hence there is a semigroupoid homomorphism $\overline{\pi}: S(G) \rightarrow \mathcal{B}(H)$ such that $\overline{\pi}([t]) = \pi(t)$ for all $t \in G$. This already gives us (R1) and (R3). 

To show (R2), let $\alpha = \epsilon_{r_1}\epsilon_{r_2} \cdots \epsilon_{r_n}[s] \in S(G)$ be such that $r_1, \ldots , r_n \in X_s$, as in Propositon \ref{propdec}. We know that we can write

\begin{center}
$\alpha^* = [s^{-1}]\epsilon_{r_n} \cdots \epsilon_{r_1}$.    
\end{center}

Therefore,
\begin{align*}
     \overline{\pi}(\alpha^*) & = \overline{\pi}([s^{-1}]\epsilon_{r_n} \cdots \epsilon_{r_1}) = \overline{\pi}([s^{-1}][r_n][r_n^{-1}] \cdots [r_1][r_1^{-1}]) = \overline{\pi}([s^{-1}])\overline{\pi}([r_n])\overline{\pi}([r_n^{-1}]) \cdots \overline{\pi}([r_1])\overline{\pi}([r_1^{-1}]) \\
     & = \pi(s^{-1})\pi(r_n)\pi(r_n^{-1}) \cdots \pi(r_1)\pi(r_1^{-1}) = \pi(s)^*\pi(r_n)\pi(r_n)^* \cdots \pi(r_1)\pi(r_1)^*\\
     & = (\pi(r_1)\pi(r_1)^* \cdots \pi(r_n)\pi(r_n)^*\pi(s))^* = (\pi(r_1)\pi(r_1^{-1}) \cdots \pi(r_n)\pi(r_n^{-1})\pi(s))^* \\
     &  = (\overline{\pi}([r_1])\overline{\pi}([r_1^{-1}]) \cdots \overline{\pi}([r_n])\overline{\pi}([r_n^{-1}])\overline{\pi}([s]))^* = \overline{\pi}([r_1][r_1^{-1}] \cdots [r_n][r_n^{-1}][s])^* = \overline{\pi}(\epsilon_{r_1} \cdots \epsilon_{r_n}[s])^* \\
     & = \overline{\pi}(\alpha)^*,
\end{align*}
which is what we wanted to demonstrate. 

On the other hand, given a semigroupoid representation $\rho$ of $S(G)$ on $H$, consider the map
\begin{gather*}
		\pi : G \rightarrow \mathcal{B}(H) \\
		t \mapsto \pi(t) = \rho([t]).
\end{gather*}

We will show that $\pi$ is a partial groupoid representation of $G$ on $H$. Take $(s,t) \in G^2$.

(PR1):\begin{align*}
     \pi(s)\pi(t)\pi(t^{-1}) & = \rho([s])\rho([t])\rho([t^{-1}])  = \rho([s][t][t^{-1}])  = \rho([st][t^{-1}]) = \rho([st])\rho([t^{-1}]) = \pi(st)\pi(t^{-1}). 
\end{align*}

(PR2): $\pi(t^{-1}) = \rho([t^{-1}]) = \rho([t]^*) = \rho([t])^* = \pi(t)^*$.

(PR3): $\pi(r(s))\pi(s) = \pi(ss^{-1})\pi(s) = \pi(s)\pi(s^{-1})\pi(s) = \rho([s])\rho([s^{-1}])\rho([s]) = \rho([s][s]^{-1})\rho([s]) $ $= \rho([s]) = \pi(s)$.

Therefore $\pi$ is a partial groupoid representation of $G$ on $H$.
\end{proof}

\section{Exel's Partial Groupoid C*-Algebra}

The main goal of this section is to generalize the definition of partial group $C^*$-algebra to obtain the purely algebraic definition of partial groupoid $C^*$-algebra. 

It is proved in \cite{exel2010actions} that if the algebra $R$ on Definition \ref{defskew} is a $C^*$-algebra, then the algebraic crossed product $R \ltimes_{\alpha}^a G$ admits an enveloping $C^*$-algebra with the relations

\begin{center}
    $(a_g\delta_g)^* = \alpha_{g^{-1}}(a_g^*)\delta_{g^{-1}}$
\end{center}
and

\begin{center}
    $ \left\Vert \sum\limits_{g \in G} a_g\delta_g \right\Vert= \sum\limits_{g \in G} \Vert a_g \Vert $.
\end{center}

This also holds on the cases of groupoids and inverse semigroupoids and can be easily proven in an analogous way.

\begin{obs}
One can show that if $R$ is a $C^*$-algebra and $\alpha$ is a partial groupoid action of a groupoid $G$ on $R$, we have $R \ltimes_{\alpha} G \cong R \ltimes_{\beta} S(G)$, where $R \ltimes_{\alpha} G$ denotes the crossed product of $R$ by $G$, that is, the enveloping $C^*$-algebra of $R \ltimes_{\alpha}^a G$ and $\beta$ is the inverse semigroupoid action associated to $\alpha$ as in Theorem \ref{teoremaacoes}. Similarly $R \ltimes_{\beta} S(G) = C_e^*(R \ltimes_{\beta}^a S(G))$.
\end{obs}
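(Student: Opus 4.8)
The plan is to upgrade the purely algebraic isomorphism already obtained — the map $\varphi : R \ltimes_{\alpha}^a G \to R \ltimes_{\beta}^a S(G)$, $a\delta_g \mapsto \overline{a\delta_{[g]}}$, of the preceding theorem, with inverse $\overline{\psi}$ — into an isomorphism of $*$-algebras, and then to transport it to the enveloping $C^*$-algebras by their universal property. Since $R \ltimes_{\alpha} G = C_e^*(R \ltimes_{\alpha}^a G)$ and $R \ltimes_{\beta} S(G) = C_e^*(R \ltimes_{\beta}^a S(G))$ already exist (as recalled at the start of this section, the universal $C^*$-seminorm is finite because it is dominated by the $\ell^1$-type norm $\Vert \sum a_g\delta_g\Vert = \sum \Vert a_g\Vert$), the whole statement reduces to showing that $\varphi$ is a $*$-isomorphism.

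First I would fix the $*$-structures. On $R \ltimes_{\alpha}^a G$ the involution is $(a\delta_g)^* = \alpha_{g^{-1}}(a^*)\delta_{g^{-1}}$, and on $L$ it is $(a\delta_s)^* = \beta_{s^*}(a^*)\delta_{s^*}$. Before comparing them I must check that the second involution descends to the quotient $R \ltimes_{\beta}^a S(G) = L/N$, i.e.\ that $N^* \subseteq N$. If $r \leq t$ and $a \in E_r$, then $r^* \leq t^*$ by the partial-order lemma of Section 4, and since $\beta_r$ is a restriction of $\beta_t$ we have $\beta_{r^*}(a^*) = \beta_{t^*}(a^*) =: b \in E_{r^*}$; hence $(a\delta_r - a\delta_t)^* = b\delta_{r^*} - b\delta_{t^*}$, again a generator of $N$. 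Thus $L/N$ is a $*$-algebra.

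Next I would check that $\varphi$ intertwines the involutions. The key identity is $\beta_{[g]^*} = \beta_{[g^{-1}]} = \alpha_{g^{-1}}$, which comes from the correspondence of Theorem \ref{teoremaacoes} together with $[g]^* = [g^{-1}]$. Then
\begin{align*}
\varphi\big((a\delta_g)^*\big) &= \varphi\big(\alpha_{g^{-1}}(a^*)\delta_{g^{-1}}\big) = \overline{\alpha_{g^{-1}}(a^*)\delta_{[g^{-1}]}} \\
&= \overline{\beta_{[g]^*}(a^*)\delta_{[g]^*}} = \big(\overline{a\delta_{[g]}}\big)^* = \varphi(a\delta_g)^*.
\end{align*}
As $\varphi$ is already known to be a bijective algebra homomorphism, this makes it a $*$-isomorphism.

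Finally I would invoke functoriality of the enveloping $C^*$-algebra. Composing $\varphi$ with the canonical $*$-homomorphism $R \ltimes_{\beta}^a S(G) \to R \ltimes_{\beta} S(G)$ gives a $*$-homomorphism into a $C^*$-algebra, which the universal property extends uniquely to $\Phi : R \ltimes_{\alpha} G \to R \ltimes_{\beta} S(G)$; the same construction applied to $\overline{\psi}$ yields $\Psi$ in the reverse direction. The compositions $\Psi \circ \Phi$ and $\Phi \circ \Psi$ restrict to the identity on the dense images of the algebraic crossed products, so by continuity they are the identity, and $\Phi$ is the desired isomorphism $R \ltimes_{\alpha} G \cong R \ltimes_{\beta} S(G)$. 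I expect the main obstacle to be precisely the $*$-bookkeeping of the middle steps — verifying that the involution is well defined on $L/N$ and that $\varphi$ respects it — since the final transport to the $C^*$-completions is the standard universal-property argument.
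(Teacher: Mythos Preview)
The paper states this as a remark without proof --- the phrase ``One can show that'' is all the argument it gives --- so there is no proof in the paper to compare against. Your proposal is the natural way to fill in the omitted details: lift the algebraic isomorphism $\varphi$ of the preceding theorem to a $*$-isomorphism, then pass to enveloping $C^*$-algebras by the universal property. The argument is correct. One small wording issue: when you write ``since $\beta_r$ is a restriction of $\beta_t$ we have $\beta_{r^*}(a^*) = \beta_{t^*}(a^*)$,'' what you actually need is that $\beta_{r^*}$ is a restriction of $\beta_{t^*}$ on $E_r$; this follows because $r = it$ gives $r^* = t^*i$, hence $\beta_{r^*} = \beta_{t^*}\circ\beta_i$, and $a^* \in E_r = E_{it} \subseteq E_i$ by Proposition~\ref{prop9}(iii), so $\beta_i(a^*) = a^*$. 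With that clarified, everything goes through.
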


\begin{defi}
Define an auxiliar $C^*$-algebra $R$ by generators and relations. The generators are the symbols $P_E$, where $E \subseteq G$ is a finite subset of the groupoid $G$. The relations are

\begin{enumerate}
    \item[(i)] $P_E^* = P_E$; and
    
    \item[(ii)] $P_EP_F = 
     \begin{cases}
       P_{E \cup F}, &\quad\text{if } E \cup F \subseteq X_g, \text{ for some } g \in G. \\
       0, &\quad\text{otherwise.}
     \end{cases}    $
\end{enumerate}

\end{defi}

\begin{propo}
$R$ is an abelian $C^*$-algebra with $P_{\emptyset}$ as unity. Besides that, every element of $R$ is a projection.
\end{propo}
\begin{proof}
    It follows from the following properties of set theory: $E \cup F = F \cup E$, $E \cup \emptyset = E = \emptyset \cup E$ and $E \cup E = E$.
\end{proof}

The above proposition tells us that if $E \subseteq G$ is such that $E \nsubseteq X_g$ for all $g \in G$, then $P_E = 0$.

Given $t \in G$, consider the map $\alpha_t : R \rightarrow R$ defined by 

\begin{center}
$\alpha_t(P_E) = 
     \begin{cases}
       P_{tE}, &\quad\text{if } \exists tx, \text{ for all } x \in E. \\
       0, &\quad\text{otherwise,} 
     \end{cases}     $
\end{center}
that is,
\begin{center}
$\alpha_t(P_E) = 
     \begin{cases}
       P_{tE}, &\quad\text{if } E \subseteq X_{t^{-1}}. \\
       0, &\quad\text{otherwise,} 
     \end{cases}     $
\end{center}

If we define $D_t = \overline{\text{span}}\{P_E : t, r(t) \in E \subseteq X_t \}$, we have that $D_t \vartriangleleft D_{r(t)} \vartriangleleft R$, for all $t \in G$, and the restriction of every $\alpha_t$ to $D_{t^{-1}}$ gives us a partial action of $G$ on $R$. Therefore we can define the algebraic crossed product $R \ltimes_{\alpha}^a G$. That gives us the following definition:

\begin{defi}
The Exel's partial groupoid $C^*$-algebra $C_p^*(G)$ is the enveloping $C^*$-algebra of $R \ltimes_{\alpha}^a G$, that is, $C_p^*(G) = R \ltimes_{\alpha} G$.

\end{defi}

The next proposition follows from Proposition \ref{teoassoc} and from the fact that an algebra $A$ is semiprime if, and only if, its ideals are idempotent.

\begin{propo}
$C_p^*(G)$ is an associative $C^*$-algebra.
\end{propo}

\begin{defi}
A $C^*$-algebra representation of a $C^*$-algebra $A$ on a Hilbert space $H$ is a $^*$-homomor-phism of $\mathbb{C}$-algebras $\phi : A \rightarrow \mathcal{B}(H)$.
\end{defi}

\begin{defi}
Let $G$ be a groupoid, $R$ a $C^*$-algebra and $\alpha$ a partial action of $G$ on $R$. We call the triple $(R,G,\alpha )$ a partial groupoid $C^*$-dynamic system.

A covariant representation of $(R,G,\alpha)$ is a triple $(\pi,u,H)$, where $\pi : R \rightarrow \mathcal{B}(H)$ is a representation of $R$ on a Hilbert space $H$ and $u : G \rightarrow \mathcal{B}(H)$ is a map given by $u(g) = u_g$, where $u_g$ is a partial isometry such that

\begin{enumerate}
    \item[(CR1)] $u_g\pi(x)u_{g^{-1}} = \pi(\alpha_g(x)),$ for all $x \in D_{g^{-1}}$
    
    \item[(CR2)] $\pi(x)u_gu_h = 
     \begin{cases}
       \pi(x)u_{gh}, &\quad\text{if } (g,h) \in G^2  \text{ and } x \in D_g \cap D_{gh}. \\
       0, &\quad\text{if } \nexists gh.
     \end{cases}     $
     
     \item[(CR3)] $u_g^* = u_{g^{-1}}$.
\end{enumerate}
\end{defi}

The definition of covariant representation give us the following relations, listed in the lemma below. 

\begin{lema} \label{lemaaux}
Let $(\pi, u, H)$ be a covariant representation of $(R,G, \alpha)$. If $x \in D_g$, then:

\begin{enumerate}
    \item[\emph{(i)}] $\pi(x)u_gu_{g^{-1}} = \pi(x)$
    
    \item[\emph{(ii)}] $\pi(x) = u_gu_{g^{-1}}\pi(x)$
\end{enumerate}
\end{lema}

\begin{proof}
Since $D_g = D_g \cap D_{r(g)} = D_g \cap D_{gg^{-1}}$, we obtain from (CR2) that $\pi(x)u_gu_{g^{-1}} = \pi(x)u_{r(g)}$.

Moreover, from (CR1) we have $\pi(x) = u_gu_{g^{-1}}\pi(x)u_gu_{g^{-1}} = u_gu_{g^{-1}}\pi(x)u_{r(g)}$.

Note that if $x \in D_g = D_g \cap D_g = D_g \cap D_{gd(g)}$, then by (CR2) $\pi(x)u_gu_{d(g)} = \pi(x)u_g$. From that and (CR1) we get $\pi(\alpha_{g^{-1}}(x)) = u_{g^{-1}}\pi(x)u_gu_{d(g)} = u_{g^{-1}}\pi(x)u_g = \pi(\alpha_{g^{-1}}(x))$. But this is the same as saying $\pi(x)u_{d(g)} = \pi(x)$, for all $x \in D_{g^{-1}}$, since $\alpha_{g^{-1}}$ is an isomorphism. Therefore, switching $g$ for $g^{-1}$, we get from that and from the fact that $d(g^{-1}) = r(g)$, that $\pi(x)u_{r(g)} = \pi(x)$, from where follows the result.
\end{proof}

\begin{defi}
Let $(\pi,u,H)$ be a covariant representation of $(R,G,\alpha)$. Define $\pi \times u : R \ltimes_{\alpha} G \rightarrow \mathcal{B}(H)$ by $(\pi \times u)(a_g\delta_g) = \pi(a_g)u_g$ and linearly extended.
\end{defi}

\begin{lema}
$\pi \times u$ is a $^*$-homomorphism.
\end{lema}

\begin{proof}
Let $a_g\delta_g , b_h\delta_h \in R \ltimes_{\alpha} G$. We will show that $(\pi \times u)((a_g\delta_g)(b_h\delta_h)) = (\pi \times u)(a_g\delta_g)(\pi \times u)(b_h\delta_h)$ and that $(\pi \times u)((a_g\delta_g)^*) = (\pi \times u)(a_g\delta_g)^*$. That will be enough, because it will only take us to extend the results linearly to obtain the lemma. 

First note that $(\pi \times u)((a_g\delta_g)(b_h\delta_h)) = \pi(a_g)u_g\pi(b_h)u_h$. But since $a_g \in D_g$, we can use Lemma \ref{lemaaux}(ii) to guarantee that $\pi(a_g) = u_gu_{g^{-1}}\pi(a_g)$. Therefore,

\begin{center}
     $(\pi \times u)((a_g\delta_g)(b_h\delta_h)) = u_gu_{g^{-1}}\pi(a_g)u_g\pi(b_h)u_h$.
\end{center}

Note now that from (CR1)

\begin{center}
     $(\pi \times u)((a_g\delta_g)(b_h\delta_h)) = u_g\pi(\alpha_{g^{-1}}(a_g))\pi(b_h)u_h = u_g\pi(\alpha_{g^{-1}}(a_g)b_h)u_h$. 
\end{center}

Since $b_h \in D_h$, then $\alpha_{g^{-1}}(a_g) \in D_{g^{-1}}$. Since $D_h$ and $D_{g^{-1}}$ are ideals, we have that $\alpha_{g^{-1}}(a_g)b_h $ $\in D_{g^{-1}} \cap D_h$. Thus

\begin{center}
     $(\pi \times u)((a_g\delta_g)(b_h\delta_h)) = u_g\pi(\alpha_{g^{-1}}(a_g)b_h)u_{g^{-1}}u_gu_h = \pi(\alpha_g(\alpha_{g^{-1}}(a_g)b_h))u_gu_h$.
\end{center}

On the other hand,

\begin{center}
     $\pi(\alpha_g(\alpha_{g^{-1}}(a_g)b_h))u_gu_h = \begin{cases}
       \pi(\alpha_g(\alpha_{g^{-1}}(a_g)b_h))u_{gh}, &\quad\text{if } (g,h) \in G^2. \\
       0, &\quad\text{otherwise.}
     \end{cases}$
\end{center}

The equality above holds because since $\alpha_{g^{-1}}(a_g)b_h \in D_{g^{-1}} \cap D_h$, we have that if $\exists gh$, then $\alpha_g(\alpha_{g^{-1}}(a_g)b_h) \in \alpha_g(D_{g^{-1}} \cap D_h) = D_g \cap D_{gh}$. So we can apply (CR2).

Now note that if $\nexists gh$, then $(a_g\delta_g)(b_h\delta_h) = 0$, from where

\begin{center}
    $(\pi \times u)((a_g\delta_g)(b_h\delta_h)) = (\pi \times u)(0) = (\pi \times u)(0\delta_g) = \pi(0)u_g = 0$.
\end{center}

That is, if $\nexists gh$, then $\pi \times u$ is multiplicative. In addition, if $\exists gh$, then 

\begin{center}
    $(\pi \times u)((a_g\delta_g)(b_h\delta_h)) = \pi(\alpha_g(\alpha_{g^{-1}}(a_g)b_h))u_{gh} = (\pi \times u)(\alpha_g(\alpha_{g^{-1}}(a_g)b_h) \delta_{gh}) = (\pi \times u)((a_g\delta_g)(b_h\delta_h))$.
\end{center}

Hence, $\pi \times u$ is also multiplicative when $\exists gh$, from where we can conclude that $\pi$ is a homomorphism since it is linear by definition. Moreover,
\begin{align*}
     (\pi \times u)(a_g\delta_g)^* & = (\pi(a_g)u_g)^* = (u_g)^*\pi(a_g)^* = u_{g^{-1}}\pi(a_g^*) = u_{g^{-1}}\pi(a_g^*)u_gu_{g^{-1}}\\
     & = \pi(\alpha_{g^{-1}}(a_g^*))u_{g^{-1}} = (\pi \times u) (\alpha_{g^{-1}}(a_g^*)\delta_{g^{-1}}) = (\pi \times u)((a_g\delta_g)^*),
\end{align*}

that is, $\pi \times u$ is a $^*$-homomorphism.
\end{proof}

We can now state the main result of this work.

\begin{teo}
Let $G$ be a groupoid and $H$ a Hilbert space. There is a one-to-one correspondence between
\begin{enumerate}
    \item[\emph{(a)}] partial groupoid representations of $G$ on $H$;
    
    \item[\emph{(b)}] inverse semigroupoid representations of $S(G)$ on $H$;
    
    \item[\emph{(c)}] $C^*$-algebra representations of $C_p^*(G)$ on $H$.
\end{enumerate}
\end{teo}

\begin{proof}
We already know from Proposition \ref{proprep} that (a) $\Leftrightarrow$ (b). We will prove (b) $\Rightarrow$ (c) $\Rightarrow$ (a). 

(b) $\Rightarrow$ (c): Let $E = \{ r_1 , \ldots , r_n \} \subseteq G$ be a finite subset of $G$ and $\pi : S(G) \rightarrow \mathcal{B}(H)$ be a representation of $S(G)$ on $H$. Define 

\begin{center}
    $Q_E = \begin{cases}
    \pi(\epsilon_{r_1} \cdots \epsilon_{r_n}), &\quad\text{if } E \subseteq X_g, \text{ for some } g \in G. \\
       0, &\quad\text{otherwise.}
    \end{cases}$
\end{center}

Note that $Q_E$ is well defined, because if $E \subseteq X_g$ we have that $r(r_i) = r(g)$, for all $i = 1, \ldots , n$. Therefore $d(r_i^{-1}) = r(r_i) = r(r_{i+1})$, so $\exists r_i^{-1}r_{i+1}$, for all $i = 1 , \ldots , n-1$, from where $\exists \epsilon_{r_i}\epsilon_{r_{i +1}}$, for all $i = 1 , \ldots , n-1$.

It is obvious that $Q_E^2 = Q_E = Q_E^*$. In addition, 
\begin{center}
    $Q_EQ_F = \begin{cases}
    Q_{E \cup F}, &\quad\text{if } E \cup F \subseteq X_g, \text{ for some } g \in G. \\
       0, &\quad\text{otherwise.}
    \end{cases}$
\end{center}

Defining $\rho : R \rightarrow \mathcal{B}(H)$ by $\rho(P_E) = Q_E$, we get a $C^*$-algebra representation of $R$ on $H$. Define also $u : G \rightarrow \mathcal{B}(H)$ by $u(g) = u_g = \pi([g])$. It is obvious that $(\rho, u, H)$ is a covariant representation of $(R,G,\alpha)$. Hence we can consider the $^*$-homomorphism $\rho \times u$, that is a $C^*$-algebra representation of $C_p^*(G)$ on $H$.

(c) $\Rightarrow$ (a): Let $\phi : C_p^*(G) \rightarrow \mathcal{B}(H)$ be a $C^*$-algebra representation. Consider the elements of the form $a_t = P_{ \{ r(t), t \}}\delta_t \in C_p^*(G)$. It is easy to see that if $(s,t) \in G^2$, then $a_sa_ta_{t^{-1}} = a_{st}a_{t^{-1}}$. It is also clear that $a_t^* = a_{t^{-1}}$. Moreover, $a_e = 1_{D_e}\delta_e$, for all $e \in G_0$. So if we define $\pi : G \rightarrow \mathcal{B}(H)$ by $\pi(t) = \phi(a_t)$ we get a partial groupoid representation of $G$ on $H$.
\end{proof}

\renewcommand{\bibname}{Bibliography}

\end{document}